\documentclass[11pt,a4paper,twoside]{article}

\usepackage[UKenglish]{babel}

\usepackage[T1]{fontenc}

\usepackage[utf8]{inputenc}

\usepackage{latexsym,amsfonts,amsmath,amsthm,amssymb,mathrsfs}

\usepackage{fullpage}

\usepackage{xcolor,bm,bbm}

\usepackage{paralist}

\usepackage{todonotes}

\usepackage{fourier}

\usepackage{mathtools}

\mathtoolsset{showonlyrefs}

\newtheorem{thm}{Theorem}[section]

\newtheorem{cor}[thm]{Corollary}

\newtheorem{proposition}[thm]{Proposition}

\newtheorem{rem}[thm]{Remark}

\newtheorem{example}[thm]{Example}

\newtheorem{lemma}[thm]{Lemma}

\newtheorem{conjecture}[thm]{Conjecture}

\newtheorem{thmalpha}{Theorem}

\allowdisplaybreaks

\begin{document}

\title{Cyclic group tensors and the volume ratio\\ of projective tensor products}

\author{Joscha Prochno\footnote{University of Passau, Faculty of Computer Science and Mathematics, Dr.-Hans-Kapfinger-Stra{\ss}e 30, 94032 Passau, Germany. Email: joscha.prochno@uni-passau.de}}

\date{}

\maketitle

\begin{abstract}
We revisit the problem of estimating the norm of the natural embedding from the Euclidean space $\ell_2^{n^3}$ into the projective tensor product $\ell_p^n\otimes_\pi\ell_q^n\otimes_\pi\ell_r^n$ in the range $1\le p\le q\le 2\le r\le\infty$, studied by Giladi, Prochno, Sch\"utt, Tomczak-Jaegermann and Werner [J. Funct. Anal. 273 (2017), 471--495]. Using convolution on finite cyclic groups, we establish new lower bounds that improve their estimates and determine the exact asymptotic order of the embedding norm whenever $1/p+1/q+1/r\ge 2$. We then apply an estimate of Boedihardjo for the injective norm of random tensors to a randomly signed cyclic convolution tensor and use multilinear interpolation to determine the optimal exponent throughout the entire parameter range, up to logarithmic factors. As consequences, we obtain corresponding improvements and asymptotically sharp estimates, up to logarithmic factors, for the volume ratio of these projective tensor products.

\bigskip

\noindent\textbf{2020 Mathematics Subject Classification:} Primary 46B28; Secondary 46B20, 52A23\\\ 

\noindent\textbf{Keywords:} John ellipsoid, multilinear interpolation, natural embedding, operator norm, projective tensor product, volume ratio

\end{abstract}

\section{Introduction \& Main Results}\label{sec:introduction}

In the geometry of Banach spaces, the volume ratio $\operatorname{vr}(X)$ of a finite-dimensional normed space $X$ compares the volume of its unit ball with that of its John ellipsoid, the unique ellipsoid of maximal volume contained in the unit ball. It is a fundamental concept in the local theory of Banach spaces and asymptotic geometric analysis with important applications in approximation theory (see, e.g., \cite{PajorTomczak1989}). The notion goes back to the seminal works of Szarek \cite{Szarek1978} and Szarek and Tomczak-Jaegermann \cite{SzarekTomczak1980}, which were inspired by Kashin's celebrated result on nearly Euclidean decompositions of $\ell_1^{2n}$ \cite{kashin1977}; nearly Euclidean is to be understood in the sense of Banach--Mazur distance. To be precise, an important connection between what is now known as the volume ratio and the existence of nearly Euclidean decompositions was already implicit in the work of Szarek \cite{Szarek1978}, namely that every $2n$-dimensional Banach space $X$ admits a decomposition into two $n$-dimensional subspaces whose Banach--Mazur distances to $\ell_2^n$ are bounded by a universal constant times $\operatorname{vr}(X)^2$. Building on this observation, Szarek and Tomczak-Jaegermann \cite{SzarekTomczak1980} formally introduced the notion of volume ratio and investigated its relationship with the cotype $2$ constant of Banach spaces. They also established Kashin-type decompositions for several classes of Banach spaces, including the projective tensor products $\ell_p^n\otimes_\pi\ell_2^n$ for $1\le p\le2$. 

The foundations of the modern theory of tensor products of Banach spaces were laid by Grothendieck in his seminal 1953 Résumé \cite{Grothendieck1953} and had a huge impact on Banach space theory. This impact and the success of the concept of tensor products is to a large extent due to the work \cite{LindenstraussPelczynksi1968} of J. Lindenstrauss and A. Pe{\l}czy\'nski in the late sixties who reformulated Grothendieck’s ideas in the context of operator ideals and made this theory accessible to a broader audience. Modern applications include quantum computing and quantum information theory \cite{AubrunSzarek2006,AubrunSzarekWerner2010,AubrunSzarek2011,SzarekWernerZyczkowski2010} or the theory of locally decodable codes \cite{Efremenko2009}.

\subsection{Main Results \& Previous Work}

In this paper, we study the volume ratio of three-fold projective tensor products of finite-dimensional $\ell_p$ spaces. Formally, the volume ratio is defined for an $n$-dimensional normed space $X$ by
\[
 \operatorname{vr}(X)
 :=\left(\frac{\operatorname{vol}_n(B_X)}
 {\operatorname{vol}_n(\mathcal E_X)}\right)^{1/n},
\]
where $\mathcal E_X$ is the ellipsoid of maximal volume contained in the unit ball $B_X$ of $X$. In the case of projective tensor products of finite-dimensional  sequence spaces, the determination of this parameter can be reduced to norm estimates for a natural Euclidean embedding. More precisely, Giladi, Prochno, Sch\"utt, Tomczak-Jaegermann and Werner \cite{GPSW2017} studied the volume ratio of the three-fold projective tensor products
\[
 X_{p,q,r}^n:=\ell_p^n\otimes_\pi\ell_q^n\otimes_\pi\ell_r^n, 
 \qquad 1\le p\le q\le r\le\infty.
\]
They obtained sharp estimates in several parameter regimes and applications to nearly Euclidean decompositions and cotype.  In particular, they established asymptotically sharp estimates for the volume radii of the unit balls of $X_{p,q,r}^n$ throughout the range $1\le p\le q\le r\le\infty$; see \cite[Theorem~B]{GPSW2017}. Consequently, determining the volume ratio was reduced to identifying the John ellipsoids. Since the spaces $X_{p,q,r}^n$ have enough symmetries (see \cite{Schuett1982, SzarekTomczak1980}), their John ellipsoids are Euclidean balls of the form
\[
\mathcal E_{X_{p,q,r}^n}
=
\Bigl\|\operatorname{id}:\ell_2^{n^3}\to X_{p,q,r}^n\Bigr\|^{-1} 
B_2^{n^3}.
\]
Thus, the remaining task was to estimate the operator norm of the identity mapping $\operatorname{id}:\ell_2^{n^3}\to X_{p,q,r}^n$; see \cite[Theorem~C and Corollary~5.7]{GPSW2017}.  Their results leave open the precise order of the volume ratio in most parts of the range $1\le p\le q\le2\le r\le\infty$; some boundary cases are covered in \cite[Remark~2.1 and Corollary~5.7]{GPSW2017}. 

The purpose of the present paper is to sharpen the existing estimates in the range $1\le p\le q\le 2\le r\le\infty$ and, in particular, to determine the optimal asymptotic order throughout this parameter range, up to logarithmic factors. We identify the underlying vector space of $X_{p,q,r}^n$ with $\mathbb R^{n^3}$ through the canonical tensor basis $(e_i\otimes e_j\otimes e_k)_{1\le i,j,k\le n}$ and consider the identity operator
\[
 \operatorname{id}:\ell_2^{n^3}\longrightarrow X_{p,q,r}^n.
\]
In what follows, constants implicit in $\lesssim$ and $\asymp$ are independent of $n$ and may depend on the fixed parameters $p,q,r$. Whenever such a dependence is present, we indicate it by subscripts. 
 
We start by recalling the relevant estimates and results from \cite{GPSW2017}. By Lemmas~5.5 and~5.6 of \cite{GPSW2017},
\begin{equation}\label{eq:intro-old-upper}
 \left\|\operatorname{id}:\ell_2^{n^3}\longrightarrow X_{p,q,r}^n\right\|
 \lesssim_{p,q,r} n^{\min\{1/q+1/r,1\}}
\end{equation}
and
\begin{equation}\label{eq:intro-old-lower}
 \left\|\operatorname{id}:\ell_2^{n^3}\longrightarrow X_{p,q,r}^n\right\|
 \gtrsim_{p,q,r}
 \max\left\{n^{1/2+1/r},\,
 n^{\min\{1/q+1/r,1\}+1/p-1},\,n^{1/q}\right\}.
\end{equation}

In particular, the bounds were already sharp when $p=1$ or $q=2$, as well as in the boundary cases $r=2$ and $r=\infty$, as follows from Lemmas~5.5 and~5.6 of \cite{GPSW2017}. Our aim is to improve the lower bounds in the remaining parameter range.

Our first result is based on an explicit (deterministic) construction involving a tensor supported on the relation $j=i+k$ in the finite cyclic group $\mathbb Z_n$. The associated trilinear form can be analyzed using Young's convolution inequality, leading to the first main result. 

\begin{thmalpha}\label{thm:intro-deterministic}
Let $1\le p\le q\le2\le r\le\infty$. Then, for every $n\in\mathbb N$,
\begin{equation}\label{eq:intro-cyclic-lower}
 \left\|\operatorname{id}:\ell_2^{n^3}\longrightarrow X_{p,q,r}^n\right\|
 \ge n^{\min\{1,\,1/p+1/q+1/r-1\}}.
\end{equation}
In particular, if $1/p+1/q+1/r\ge2$, then
\begin{equation}\label{eq:intro-exact}
 \left\|\operatorname{id}:\ell_2^{n^3}\longrightarrow X_{p,q,r}^n\right\|
 \asymp_{p,q,r}n.
\end{equation}
\end{thmalpha}

The estimate \eqref{eq:intro-cyclic-lower} is proved in Section~\ref{sec:cyclic}; the upper estimate needed to obtain \eqref{eq:intro-exact} is the previously known bound \eqref{eq:intro-old-upper}. Thus Theorem~\ref{thm:intro-deterministic} extends the region in which matching bounds are available, rather than replacing the upper estimate of \cite{GPSW2017}. The deterministic construction unfortunately does not by itself settle the entire parameter range. Our second main result does so with a logarithmic loss.

\begin{thmalpha}\label{thm:intro-universal}
For all $1\le p\le q\le2\le r\le\infty$,
\begin{equation}\label{eq:intro-global}
 \frac{n^{\min\{1/q+1/r,1\}}}{(\log(en))^{4(1-1/q)}}
 \lesssim
 \left\|\operatorname{id}:\ell_2^{n^3}\longrightarrow X_{p,q,r}^n\right\|
 \lesssim_{p,q,r} n^{\min\{1/q+1/r,1\}}. 
\end{equation}
\end{thmalpha}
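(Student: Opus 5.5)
The upper bound in \eqref{eq:intro-global} is exactly the previously known estimate \eqref{eq:intro-old-upper}, so only the lower bound needs proof. By duality,
\[
\|\operatorname{id}:\ell_2^{n^3}\to X_{p,q,r}^n\|=\sup_T\frac{\|T\|_{2}}{\|T\|_{\varepsilon}},
\qquad
\|T\|_\varepsilon=\sup\Bigl\{\textstyle\sum T_{ijk}x_iy_jz_k:\ \|x\|_{p'},\|y\|_{q'},\|z\|_{r'}\le1\Bigr\},
\]
since the dual of $X_{p,q,r}^n$ is the injective tensor product $\ell_{p'}^n\otimes_\varepsilon\ell_{q'}^n\otimes_\varepsilon\ell_{r'}^n$. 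So it suffices to exhibit a single tensor $T$ with small injective norm relative to its Hilbert--Schmidt norm. The test tensor will be the randomly signed cyclic convolution tensor
\[
T_{ijk}=\varepsilon_{ik}\mathbbm 1_{\{j=i+k\}},\qquad i,j,k\in\mathbb Z_n,
\]
with $(\varepsilon_{ik})$ i.i.d.\ Rademacher signs. Then $\|T\|_2=n$, and we need
\[
\|T\|_{\varepsilon}\lesssim n^{1-\min\{1/q+1/r,1\}}\,(\log en)^{4(1-1/q)}
\]
for a realization of the signs.

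The first reduction is to observe that the $\varepsilon$-norm is monotone in $p$: since $p\le q$ we have $p'\ge q'$, and enlarging the $x$-ball only increases the norm. It therefore suffices to treat the worst case $p=q$, or even to bound the norm with the $x$-variable in $\ell_{q'}$. Next, I would compute the endpoint estimates for the multilinear form $(x,y,z)\mapsto\sum_{i,k}\varepsilon_{ik}x_iy_{i+k}z_k$.

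\textbf{Endpoint $q=2$.} With all exponents in $(2,2,r')$, Boedihardjo's injective norm bound for random tensors, applied to this sparse tensor with $n^2$ nonzero entries and at most one entry per fibre, gives $\|T\|_{\ell_2\times\ell_2\times\ell_{r'}}\lesssim$ polylog times $n^{1/2-1/r}$ for $r\ge 2$ (for $r'=2$ one expects roughly $\sqrt n$ up to logs). This matches $n^{1-\min\{1/2+1/r,1\}}=n^{1/2-1/r}$.

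\textbf{Endpoint $q=1$.} Here $q'=\infty$, and the bound follows deterministically. Bounding the signs trivially and using Young's convolution inequality as in Section~\ref{sec:cyclic}, the $(\infty,\infty,r')$ norm of the form is at most $n\,\|z\|_1\le n\cdot n^{1/r}$ on the $\ell_{r'}$ ball. That gives $n^{1+1/r}$, which is wrong, so we must normalize correctly. We need $\|T\|_\varepsilon\lesssim n^{1-\min\{1+1/r,1\}}=1$, which fails since $x=y=\mathbbm 1$ already gives size $n$. Hence at $q=1$ the claim must use the sharper fact that $p\le q=1$ forces the target $n^{1}$, and the correct endpoint is instead the $\ell_1$-side bound $\|\operatorname{id}\|\ge n$ from Theorem~\ref{thm:intro-deterministic} itself, namely $1/p+1/q+1/r-1\ge1$.

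So the plan is to interpolate between two endpoints in the $1/q$ variable with $r$ fixed: the deterministic Theorem~\ref{thm:intro-deterministic} at the point where $1/q+1/r=1$ (taking $p=q$), and Boedihardjo's bound at $q=2$. The interpolation is carried out on the injective side, with the exponent in $1/q'$ varying linearly, using multilinear complex (Riesz--Thorin type) interpolation for the trilinear form with the fixed random coefficients. Crucially, the same random tensor has to serve at both endpoints. At the deterministic endpoint I therefore need the signed tensor to satisfy the Young bound as well. This holds because Young's inequality applied to $|x|,|y|,|z|$ ignores the signs.

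The resulting exponent $n^{1-1/q-1/r}$ is linear in $1/q$ on the segment, so interpolation yields exactly the target power. The logarithmic factor $(\log en)^{4\theta}$, with $\theta=2(1-1/q)$ the weight on the random endpoint, produces the stated $(\log en)^{4(1-1/q)}$, provided Boedihardjo's estimate carries a $(\log en)^{2}$ factor. For $1/q+1/r\ge1$ we are already done by Theorem~\ref{thm:intro-deterministic}.

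The main obstacle is the random endpoint. One must verify that Boedihardjo's estimate, applied to the sparse cyclic tensor with mixed norms $\ell_2\times\ell_2\times\ell_{r'}$, gives the sharp power $n^{1/2-1/r}$ with only a $\log^2$ loss, uniformly in $r$. This will likely require interpolating in $r$ between $r=2$ and $r=\infty$, which is a second multilinear interpolation, or else a direct variance and sparsity computation for the fibres.
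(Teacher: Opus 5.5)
Your ingredients (the signed cyclic tensor, Boedihardjo's bound, multilinear interpolation, the reduction to $p=q$) are the right ones, but the interpolation scheme you actually propose does not close.

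First, a preliminary slip. The dual norm of $X_{p,q,r}^n$ is $\sup\{|T(x,y,z)|:\|x\|_p,\|y\|_q,\|z\|_r\le 1\}$, not a supremum over the $\ell_{p'},\ell_{q'},\ell_{r'}$ balls: the injective norm on $\ell_{p'}^n\otimes_\varepsilon\ell_{q'}^n\otimes_\varepsilon\ell_{r'}^n$ takes the supremum over the \emph{dual} balls. With the correct formula your reduction to $p=q$ is valid; with yours it runs the wrong way. The ``failure'' you found at $q=1$ also disappears:
$\|T_\varepsilon\|_{\ell_1^n\times\ell_1^n\times\ell_r^n}\le\|T_\varepsilon\|_{\ell_1^n\times\ell_1^n\times\ell_\infty^n}=\max_{i,j}\sum_k|a_{ijk}|=1$, which is exactly the target. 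Similarly, at $q=2$ the Hilbertian point $r=2$ costs only $(\log en)^2$, not $\sqrt n$. The bound $n^{1/2-1/r}L_n$ on $\ell_2\times\ell_2\times\ell_r$ follows at once by interpolating Boedihardjo's $\ell_2\times\ell_2\times\ell_2$ estimate with $\|T_\varepsilon\|_{\ell_2^n\times\ell_2^n\times\ell_\infty^n}\le\sqrt n\,L_n$ (Cauchy--Schwarz in $k$). So the random endpoint is not the real obstacle.

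The genuine gap is your deterministic endpoint on the line $1/q+1/r=1$.
\begin{itemize}
\item Theorem~\ref{thm:intro-deterministic} gives order $n$ only when $1/p+1/q+1/r\ge2$. For $p=q$ on that line this forces $q=1$, $r=\infty$.
\item At any other point of the line, Young's inequality yields only $\|T_\varepsilon\|_{\ell_q^n\times\ell_q^n\times\ell_r^n}\le n^{2-S}=n^{1/r}$. You need a polylogarithmic bound there.
\item Ignoring the signs cannot do better, since Proposition~\ref{prop:cyclic-form-norm} shows $n^{1/r}$ is attained by the unsigned tensor.
\item Interpolating at fixed $r$ between this endpoint and $q=2$ therefore gives $n^{1-1/q-1/r+\theta/r}$, where $\theta$ is the weight on the Young endpoint. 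That is a polynomial loss.
\end{itemize}
For the same reason, your sentence ``for $1/q+1/r\ge1$ we are already done by Theorem~\ref{thm:intro-deterministic}'' is false. For $p=q=3/2$, $r=3$ one has $1/q+1/r=1$, but that theorem gives only $n^{2/3}$, whereas the claim is $n/(\log en)^{4/3}$. The saturated line is precisely where the random signs are needed.

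The missing observation is that, for $p=q$, the line $1/q+1/r=1$ is the segment joining the exponent triples $(1,1,\infty)$ and $(2,2,2)$. Interpolating the one-point-fibre bound $\|T_\varepsilon\|_{\ell_1\times\ell_1\times\ell_\infty}=1$ with $\|T_\varepsilon\|_{\ell_2\times\ell_2\times\ell_2}\le L_n$ gives $\|T_\varepsilon\|_{\ell_q\times\ell_q\times\ell_{q'}}\lesssim L_n^{2-2/q}$. This is the true source of the factor $(\log en)^{4(1-1/q)}$ (your $\theta=2(1-1/q)$ already tacitly assumes the non-random endpoint is at $q=1$). The rest of the paper's argument is then:
\begin{itemize}
\item Adding the third vertex $(2,2,\infty)$, with weights $2/q-1$, $2/r$, $2-2/q-2/r$, covers $1/q+1/r\le1$.
\item The inclusion $B_{\ell_r^n}\subseteq B_{\ell_{q'}^n}$ covers $1/q+1/r\ge1$.
\end{itemize}
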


The upper bound in Theorem~\ref{thm:intro-universal} is again due to \cite[Lemma~5.5]{GPSW2017}; the lower bound is the new assertion. For its proof, we place independent random signs on the cyclic support. The essential probabilistic input is the estimate of Boedihardjo \cite[Corollary~1.4]{Boedihardjo2024} for the injective norm of random tensors with independent centred bounded entries; we note that this bound is similar to the bound by Bandeira and van Handel for the expected spectral norm of a random matrix with independent entries \cite{BandeiraHandel2016}. In our setting, every one-dimensional coordinate fibre of the support contains exactly one nonzero coefficient. Boedihardjo's result therefore yields a bound of order $(\log(en))^2$ for the expected Hilbertian injective norm. We then combine a suitable choice of signs with elementary endpoint estimates and multilinear interpolation (see \cite[Section~4.4]{BerghLofstrom1976}). The random-tensor inequality itself is not new; its use for this cyclic construction and the resulting lower embedding estimate are the ingredients of the present argument. In the region $1/p+1/q+1/r\ge2$, Theorem~\ref{thm:intro-deterministic} removes the logarithmic loss.

The corresponding consequences for volume ratio follow from the volume-radius estimate in \cite[Theorem~B]{GPSW2017} and the relation between volume ratio and the Euclidean embedding norm for spaces with enough symmetries, as used in \cite[(2.3)]{GPSW2017}.

\begin{cor}\label{cor:intro-volume}
For $1\le p\le q\le2\le r\le\infty$,
\begin{equation}\label{eq:intro-vr}
 \frac{n^{\min\{1/q-1/2,\,1/2-1/r\}}}
 {(\log(en))^{4(1-1/q)}}
 \lesssim \operatorname{vr}(X_{p,q,r}^n)
 \lesssim_{p,q,r}
 n^{\min\{1/q-1/2,\,1/2-1/r\}}.
\end{equation}
If, in addition, $1/p+1/q+1/r\ge2$, then
\[
 \operatorname{vr}(X_{p,q,r}^n)
 \asymp_{p,q,r}n^{1/2-1/r}.
\]
\end{cor}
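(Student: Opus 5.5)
The plan is to combine Theorem~\ref{thm:intro-universal} (and Theorem~\ref{thm:intro-deterministic} for the second assertion) with the volume-radius estimate of \cite[Theorem~B]{GPSW2017}. Since $X_{p,q,r}^n$ has enough symmetries, its John ellipsoid is $\|\operatorname{id}:\ell_2^{n^3}\to X_{p,q,r}^n\|^{-1}B_2^{n^3}$. Hence, with $N=n^3$,
\[
 \operatorname{vr}(X_{p,q,r}^n)
 =\Bigl\|\operatorname{id}:\ell_2^{N}\to X_{p,q,r}^n\Bigr\|\cdot
 \left(\frac{\operatorname{vol}_N(B_{X_{p,q,r}^n})}{\operatorname{vol}_N(B_2^N)}\right)^{1/N},
\]
which is \cite[(2.3)]{GPSW2017}. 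So the whole task reduces to multiplying the two-sided embedding bounds by the volume-radius asymptotics.

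Next I recall the volume radius from \cite[Theorem~B]{GPSW2017} in the range $1\le p\le q\le 2\le r\le\infty$. Since $\operatorname{vol}_N(B_2^N)^{1/N}\asymp N^{-1/2}=n^{-3/2}$, the expected normalized form is
\[
 \left(\frac{\operatorname{vol}_N(B_{X_{p,q,r}^n})}{\operatorname{vol}_N(B_2^N)}\right)^{1/N}\asymp_{p,q,r} n^{-1/2}.
\]
This is consistent with the fact that the projective ball sits between the balls of $\ell_1\otimes_\pi\ell_1\otimes_\pi\ell_1$ and $\ell_2\otimes_\pi\ell_2\otimes_\pi\ell_2$ up to the relevant factors. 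The precise exponent has to be read off Theorem~B in this regime. I would check it against the known endpoints: for $q=2$ the volume ratio must equal the sharp value from \cite{GPSW2017}, and for $p=q=r=2$ it is bounded by cotype considerations.

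Multiplying then gives both bounds. The upper bound is $n^{\min\{1/q+1/r,1\}}\cdot n^{-1/2}$, and
\[
 \min\{1/q+1/r,1\}-1/2=\min\{1/q-1/2+1/r,\,1/2\}.
\]
Here lies the main obstacle. This does not literally coincide with $\min\{1/q-1/2,1/2-1/r\}$, so the volume-radius factor must carry an additional $r$-dependence, presumably of the form $n^{-1/2-1/r}$ or $n^{-1/r}$ on the appropriate piece. I would therefore extract the exact exponent of Theorem~B carefully, possibly in a piecewise form according to whether $1/q+1/r\ge1$, and verify the identity
\[
 \min\{1/q+1/r,1\}+\gamma_{p,q,r}=\min\{1/q-1/2,\,1/2-1/r\},
\]
where $n^{\gamma_{p,q,r}}$ denotes the normalized volume radius. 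The check is done separately in the cases $1/q+1/r\ge1$ and $1/q+1/r<1$.

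Once this identity holds, the rest is routine. The lower bound follows in the same way from the lower estimate of Theorem~\ref{thm:intro-universal}, and the logarithmic factor $(\log(en))^{4(1-1/q)}$ is inherited unchanged. For the final assertion, note that $1/p+1/q+1/r\ge2$ together with $p\le q$ and $1/p\le1$ forces $1/q+1/r\ge1$. Then Theorem~\ref{thm:intro-deterministic} gives the embedding norm $\asymp n$ without logarithms, and the minimum reduces to $1/2-1/r$, since $1/q-1/2\ge 1/2-1/r$ exactly when $1/q+1/r\ge1$.
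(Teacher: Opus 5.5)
Your reduction is the same as the paper's. By enough symmetries, $\operatorname{vr}(X_{p,q,r}^n)$ equals $\|\operatorname{id}:\ell_2^{n^3}\to X_{p,q,r}^n\|$ times the normalized volume radius, and one multiplies Theorem~\ref{thm:intro-universal} (resp.\ Theorem~\ref{thm:intro-deterministic}) by the asymptotics of \cite[Theorem~B]{GPSW2017}. As written, however, the argument does not go through: the one quantitative input not already contained in those theorems is stated wrongly and then left open.

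The normalized volume radius is not $\asymp n^{-1/2}$. Theorem~B gives $\operatorname{vol}_{n^3}(B_{X_{p,q,r}^n})^{1/n^3}\asymp_{p,q,r} n^{-2-1/r}$ on the whole range $1\le p\le q\le 2\le r\le\infty$. Dividing by $\operatorname{vol}_{n^3}(B_2^{n^3})^{1/n^3}\asymp n^{-3/2}$, the factor is $n^{-1/2-1/r}$. This holds uniformly: it does not depend on $p,q$, and there is no piecewise form according to whether $1/q+1/r\ge1$. Your value is off by $n^{1/r}$. Plugged in, it would give an upper bound weaker than the claim by $n^{1/r}$ and a lower bound contradicting the claimed upper bound when $r<\infty$. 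The endpoint checks you propose already expose this: for $p=q=r=2$ your value gives $\operatorname{vr}\asymp n\cdot n^{-1/2}=n^{1/2}$ rather than a bounded quantity.

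Your proposed remedy is circular. You want to find $\gamma_{p,q,r}$ so that $\min\{1/q+1/r,1\}+\gamma_{p,q,r}=\min\{1/q-1/2,1/2-1/r\}$, but that identity is the conclusion. The exponent $\gamma_{p,q,r}$ must be read off from Theorem~B, not solved for.

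Once you insert $\gamma_{p,q,r}=-\tfrac12-\tfrac1r$, the argument closes at once, even without a case split, since subtracting a constant inside a minimum gives $\min\{1/q+1/r,1\}-\tfrac12-\tfrac1r=\min\{\tfrac1q-\tfrac12,\tfrac12-\tfrac1r\}$. The rest of your argument is then correct. The logarithmic factor carries over as you say. For the case $1/p+1/q+1/r\ge2$, this condition forces $1/q+1/r\ge1$, the embedding norm is $\asymp n$, and so $\operatorname{vr}\asymp n\cdot n^{-1/2-1/r}=n^{1/2-1/r}$.
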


Here the volume-radius asymptotics and the reduction to the embedding norm are inherited from \cite{GPSW2017}; the improvement comes from the new embedding lower bounds. 

\medskip

The paper is organized as follows. In Section~\ref{sec:background tensor products}, we recall some basic facts about tensor products of Banach spaces. Section~\ref{sec:cyclic} is devoted to the deterministic cyclic construction. In Section~\ref{sec:random}, we investigate its randomly signed counterpart and derive the logarithmic estimates via interpolation.

\section{Some background on tensor products of Banach spaces}\label{sec:background tensor products}

Let $(X,\|\cdot\|_X)$ and $(Y,\|\cdot\|_Y)$ be two Banach spaces. The algebraic tensor product $X\otimes Y$ can be realized as a space of linear functionals on the vector space of all bilinear forms on $X\times Y$. For $x\in X$, $y\in Y$, and a bilinear form $B$ on $X\times Y$, we define
\[
(x\otimes y)(B):=B(x,y).
\]
The projective tensor product $X\otimes_\pi Y$ is the algebraic tensor product $X\otimes Y$ equipped with the projective tensor norm
\[
\|A\|_{X\otimes_\pi Y}
:=
\inf\left\{
\sum_{i=1}^m \|x_i\|_X\|y_i\|_Y :
A=\sum_{i=1}^m x_i\otimes y_i,\;
x_i\in X,\;y_i\in Y
\right\}.
\]
Similarly, the injective tensor product $X\otimes_\varepsilon Y$ is the algebraic tensor product $X\otimes Y$ equipped with the injective tensor norm
\[
\|A\|_{X\otimes_\varepsilon Y}
:=
\sup\left\{
\left|\sum_{i=1}^m\varphi(x_i)\psi(y_i)\right| :
\varphi\in B_{X^*},\;
\psi\in B_{Y^*}
\right\},
\]
where $A=\sum_{i=1}^m x_i\otimes y_i$ is any representation of $A$; we may also use the short notation $\|\cdot\|_{\mathrm{inj}}$ if the underlying space is clear from the context. Throughout this paper, we restrict ourselves to finite-dimensional Banach spaces, so that the algebraic tensor products equipped with either of these norms are complete. In this setting, the projective tensor product $X\otimes_\pi Y$ can be identified isometrically with the space $\mathcal N(X^*,Y)$ of nuclear operators from $X^*$ into $Y$. Moreover, its dual can be identified isometrically with the space $\mathcal L(X,Y^*)$ of bounded linear operators from $X$ into $Y^*$, equipped with the operator norm.

The injective tensor product $X\otimes_\varepsilon Y$ can be identified isometrically with $\mathcal L(X^*,Y)$. In particular, for $A\in X\otimes_\varepsilon Y$, we have
\[
\|A\|_{X\otimes_\varepsilon Y}
=
\sup_{x^*\in B_{X^*}}\|Ax^*\|_Y. 
\]

We refer the reader to the wonderful books of \cite{DiestelFourieSwart2008} and \cite{Ryan2002} for more information about tensor products.

\section{A Lower Bound for the Identity Operator via Cyclic Convolution}\label{sec:cyclic}

Let $n\in\mathbb N$ and consider $X_{p,q,r}^n=\ell_p^n\otimes_\pi\ell_q^n\otimes_\pi\ell_r^n$ with $1\leq p\leq q\leq2\leq r\leq\infty$. We show that the upper bound for the norm of the identity obtained in \cite{GPSW2017} is sharp in the region
$\frac1p+\frac1q+\frac1r\geq2$.
The argument is based on convolution on the finite cyclic group $\mathbb Z_n:=\mathbb Z/n\mathbb Z$ of $n$ elements,
which we identify with $\{0,\ldots,n-1\}$, with addition understood modulo $n$. We use the additive structure of $\mathbb Z_n$ only to perform index arithmetic modulo $n$ and to define cyclic convolution, which allows us to apply Young's convolution inequality; no further group-theoretic properties are needed. Throughout, vectors indexed by $\mathbb Z_n$ are identified with vectors in $\mathbb R^n$, and all $\ell_s$-norms are taken with respect to counting measure, that is,
\[
    \|x\|_s := \left(\sum_{i\in\mathbb Z_n}|x_i|^s\right)^{1/s},
    \qquad 1\leq s<\infty,
\]
and
\[
    \|x\|_\infty:=\max_{i\in\mathbb Z_n}|x_i|.
\]
We recall Young's convolution inequality on the finite group $\mathbb Z_n$. If $1\leq a,b,c\leq\infty$ satisfy
\begin{equation}\label{eq:young-relation}
\frac1a+\frac1b=1+\frac1c,
\end{equation}
then
\begin{equation}\label{eq:young-convolution}
\|x*y\|_c\leq\|x\|_a\|y\|_b,
\end{equation}
where
\begin{equation}\label{eq:definition-cyclic-convolution}
(x*y)_k:=\sum_{i\in\mathbb Z_n}x_i y_{k-i}
\end{equation}
is the discrete convolution of $x$ and $y$. The constant in \eqref{eq:young-convolution} is one (see, e.g., \cite{Folland2016}).

Recall that, identifying the standard basis of $\mathbb R^n$
with $(e_i)_{i\in\mathbb Z_n}$, we may associate a trilinear
form with every tensor in $(\mathbb R^n)^{\otimes 3}$.
Indeed, using the standard Euclidean inner product on
$\mathbb R^n$, an elementary tensor $e_i\otimes e_j\otimes e_k$
corresponds to the trilinear form
\[
(x,y,z)\mapsto
\langle e_i,x\rangle
\langle e_j,y\rangle
\langle e_k,z\rangle
=x_i y_j z_k.
\]
By linearity, a general tensor
\[
A=\sum_{i,j,k\in\mathbb Z_n}
a_{ijk}\,e_i\otimes e_j\otimes e_k
\]
is therefore identified with the trilinear form
\[
T_A(x,y,z)
=\sum_{i,j,k\in\mathbb Z_n}
a_{ijk}x_i y_j z_k,
\qquad x,y,z\in\mathbb R^n.
\]
In what follows, we shall also use the notation $x_{+}:=\max\{x,0\}$, $x\in\mathbb R$. 

Our construction below is inspired by the classical structure tensor of a finite group algebra and, more specifically, by cyclic group tensors (see, e.g., Cohn and Umans \cite{CohnUmans2003}, Alexeev, Forbes, and Tsimerman \cite{AlexeevForbesTsimerman2011}, and Alman \cite{Alman2019}). For the cyclic group $\mathbb Z_n$, the tensor $A$ defined below coincides, up to a permutation of its tensor factors, with the structure tensor of the group algebra $\mathbb R[\mathbb Z_n]$, whose multiplication corresponds to cyclic convolution.

\begin{proposition}\label{prop:cyclic-form-norm}
Let $1\leq p\leq q\leq2\leq r\leq\infty$ and put $S:=\frac1p+\frac1q+\frac1r$. For the tensor
$A:=\sum_{i,k\in\mathbb Z_n}e_i\otimes e_{i+k}\otimes e_k$
and its associated trilinear form $T_A:\ell_p^n\times \ell_q^n\times \ell_r^n \to \mathbb R$ with
$T_A(x,y,z):=\sum_{i,k\in\mathbb Z_n}x_i y_{i+k}z_k$,
we have
\begin{equation}\label{eq:exact-cyclic-form-norm}
\|T_A\|=n^{(2-S)_+}.
\end{equation}
Moreover,
\begin{equation}\label{eq:euclidean-norm-A}
\|A\|_{\ell_2^{n^3}}=n.
\end{equation}
\end{proposition}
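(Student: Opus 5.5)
The Euclidean norm is immediate. For each pair $(i,k)\in\mathbb Z_n^2$ the triple $(i,i+k,k)$ is distinct, so $A$ has exactly $n^2$ coefficients equal to one. Hence $\|A\|_{\ell_2^{n^3}}=\sqrt{n^2}=n$, which is \eqref{eq:euclidean-norm-A}. For \eqref{eq:exact-cyclic-form-norm} I prove an upper and a lower bound, splitting into the cases $S\ge 2$ and $S<2$. First rewrite the form as a convolution. With $\tilde x_i:=x_{-i}$,
\[
T_A(x,y,z)=\sum_{k}z_k\sum_i x_i y_{i+k}=\sum_k z_k\,(\tilde x* y)_k ,
\]
where the inner sum is $\sum_{m}\tilde x_{m} y_{k-m}$ after substituting $m=-i$. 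Reflection preserves all $\ell_s$ norms, so by H\"older,
\[
|T_A(x,y,z)|\le \|\tilde x*y\|_{r'}\,\|z\|_r .
\]

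\emph{Upper bound.} I apply Young's inequality \eqref{eq:young-convolution} with exponents $a\le p$, $b\le q$ and $c=r'$; this requires $\frac1a+\frac1b=1+\frac1{r'}=2-\frac1r$. Since $r\ge2$, the right-hand side lies in $[1,2]$.

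If $S\ge2$, then $\frac1p+\frac1q\ge 2-\frac1r$, and I can choose $a\ge p$, $b\ge q$ (with $a,b\in[1,\infty]$) satisfying the relation. Monotonicity $\|x\|_a\le\|x\|_p$ for $a\ge p$ then gives $|T_A|\le \|x\|_p\|y\|_q\|z\|_r$. Concretely, I decrease $1/p$ and $1/q$ to reach the target sum, which is possible because the target is at least $0$.

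If $S<2$, I need the opposite direction: $a\le p$ and $b\le q$ with $\frac1a+\frac1b=2-\frac1r$. Since the target is at most $2$, such $a,b\ge1$ exist. The comparison $\|x\|_a\le n^{1/a-1/p}\|x\|_p$ then gives a total loss of $n^{(1/a+1/b)-(1/p+1/q)}=n^{2-S}$. The only thing to check is that suitable $a,b\in[1,\infty]$ exist in both cases, which is clear since each reciprocal may range over $[0,1]$ and $1/p,1/q\in[1/2,1]$.

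\emph{Lower bound.} If $S\ge2$, I take $x=e_0$, $y=e_0$, $z=e_0$, which gives $T_A=1$, so $\|T_A\|\ge1$. If $S<2$, I take constant vectors $x=y=z=\mathbf 1$. Then $T_A=n^2$, while $\|x\|_p\|y\|_q\|z\|_r=n^{S}$, so $\|T_A\|\ge n^{2-S}$. Together with the upper bound this gives exactly $n^{(2-S)_+}$.

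The main (minor) obstacle is the bookkeeping of exponents in Young's inequality at the endpoints, for example $r=\infty$ giving $c=1$, or $a=\infty$. These cases are covered because \eqref{eq:young-convolution} holds on all of $[1,\infty]$ with constant one.
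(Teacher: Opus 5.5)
Your proposal is correct and follows the paper's proof essentially step by step: the same rewriting $T_A(x,y,z)=\sum_k z_k(\widetilde{x}*y)_k$, Young's inequality with $c=r^*$ and $a\ge p$, $b\ge q$ when $S\ge2$ (respectively $a\le p$, $b\le q$ when $S<2$), followed by H\"older, and the same test vectors $e_0$ and constant vectors for the lower bounds. The only cosmetic difference is in the case $S<2$: you combine the unnormalized Young inequality with $\|x\|_a\le n^{1/a-1/p}\|x\|_p$, while the paper passes to the normalized counting measure, which amounts to the same computation.
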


\begin{proof}
The tensor $A$ in its coefficient array form $A=(a_{ijk})_{i,j,k\in\mathbb Z_n}$ has coefficients
\[
a_{ijk}=
\begin{cases}
1 &:\, j=i+k\pmod n,\\
0 &:\, \text{otherwise}.
\end{cases}
\]

Now for every pair $(i,k)\in\mathbb Z_n^2$ there is precisely one $j\in\mathbb Z_n$ for which $a_{ijk}=1$. Thus $A$ has exactly $n^2$ non-zero coefficients, all of modulus one, and therefore
\[
\|A\|_{\ell_2^{n^3}}
=\left(\sum_{i,j,k\in\mathbb Z_n}|a_{ijk}|^2\right)^{1/2} 
=n.
\]  
This establishes the second assertion of the proposition.

It remains to compute the norm of $T_A$. The key observation is that the trilinear form is associated with cyclic convolution, i.e.,
\[
\begin{aligned}
T_A(x,y,z)
&=\sum_{i,k\in\mathbb Z_n}x_i y_{i+k}z_k\\
&=\sum_{j\in\mathbb Z_n}y_j
  \sum_{i\in\mathbb Z_n}x_i z_{j-i}\\
&\stackrel{\eqref{eq:definition-cyclic-convolution}}{=}\sum_{j\in\mathbb Z_n}y_j(x*z)_j\\
&=\langle x*z,y\rangle.
\end{aligned}
\]
Here, we have used the substitution \(j=i+k\) in \(\mathbb Z_n\) and $\langle\cdot,\cdot\rangle$ denotes the standard Euclidean inner product on $\mathbb R^n$.
We first prove the upper bound. Equip $\mathbb Z_n$ with the normalized counting measure
\[
\mu_n:=\frac1n\sum_{j\in\mathbb Z_n}\delta_j
\] 
and write $\|\cdot\|_{L_s}$ for the corresponding $L_s(\mu_n)$-norm. Recall that the relationship between unnormalized ($\ell_s$) and normalized ($L_s$) norms is
\begin{equation}\label{eq:normalized-counting-norm}
\|x\|_{L_s}=n^{-1/s}\|x\|_s.
\end{equation}
We also use the normalized convolution
\[
(f*_{\mu_n}g)(k):=\frac1n\sum_{i\in\mathbb Z_n}f_i g_{k-i}.
\]
In this setting, Young's inequality \eqref{eq:young-convolution} takes the form
\begin{equation}\label{eq:normalized-young}
\|f*_{\mu_n}g\|_{L_c}\leq\|f\|_{L_a}\|g\|_{L_b}
\end{equation}
whenever
\[
\frac1a+\frac1b=1+\frac1c.
\]
Suppose first that $S\geq2$. Then
\[
\frac1p+\frac1q\geq2-\frac1r=1+\frac1{r^*},
\]
where $r^*$ denotes the Hölder conjugate of $r$ defined by the relation $\frac1r + \frac1{r^*}=1$.
Hence, there exist $a\geq p$ and $b\geq q$ such that
\begin{equation}\label{eq:young-exponents-supercritical}
\frac1a+\frac1b=2-\frac1r=1+\frac1{r^*}.
\end{equation}
Since $a\geq p$ and $b\geq q$, monotonicity of the unnormalized $\ell_s$-norms gives $\|x\|_a\leq\|x\|_p$ and $\|y\|_b\leq\|y\|_q$.
Now define the reflected vector $\widetilde{x}$ by
$\widetilde{x}_i:=x_{-i}$, $i\in\mathbb Z_n$, and put
\[
w_k:=\sum_{i\in\mathbb Z_n}x_i y_{i+k},
\qquad k\in\mathbb Z_n.
\]
Then, by the definition of cyclic convolution \eqref{eq:definition-cyclic-convolution},
\[
(\widetilde{x}*y)_k
=\sum_{j\in\mathbb Z_n}\widetilde{x}_j y_{k-j}
=\sum_{j\in\mathbb Z_n}x_{-j}y_{k-j}
=\sum_{i\in\mathbb Z_n}x_i y_{k+i}
=w_k,
\]
where we used the substitution $i=-j$ in $\mathbb Z_n$.
Thus $w=\widetilde{x}*y$. Moreover, we have
$\|\widetilde{x}\|_a=\|x\|_a$.
Therefore, by Young's inequality \eqref{eq:young-convolution} and \eqref{eq:young-exponents-supercritical},
\[
\|w\|_{r^*}
=\|\widetilde{x}*y\|_{r^*}
\leq\|\widetilde{x}\|_a\|y\|_b
=\|x\|_a\|y\|_b
\leq\|x\|_p\|y\|_q.
\]
Consequently, since
$T_A(x,y,z)=\sum_{k\in\mathbb Z_n}w_kz_k$,
H\"older's inequality gives
\[
\begin{aligned}
|T_A(x,y,z)|
&=\left|\sum_{k\in\mathbb Z_n}w_kz_k\right|\\
&\leq\|w\|_{r^*}\|z\|_r\\
&\leq\|x\|_p\|y\|_q\|z\|_r
\end{aligned}
\]
and so $\|T_A\|\leq1$. Taking $x=y=z=e_0$ shows that $\|T_A\|\geq1$. Thus, whenever $S\geq2$,
\[
\|T_A\|=1.
\]

Suppose now that $S<2$. Then
\[
\frac1p+\frac1q<2-\frac1r.
\]
Since $1\leq p\leq q\leq2$, we have $1/p+1/q\geq1$, while $2-1/r\leq2$. Hence, we may choose $a\leq p$ and $b\leq q$ such that
\begin{equation}\label{eq:young-exponents-subcritical}
\frac1a+\frac1b=2-\frac1r=1+\frac1{r^*}.
\end{equation}
Since $(\mathbb Z_n,\mu_n)$ is a probability space, the monotonicity of $L_s$-norms for $a\leq p$ and $b\leq q$ yields
\[
\|x\|_{L_a}\leq\|x\|_{L_p}
\qquad\text{and}\qquad
\|y\|_{L_b}\leq\|y\|_{L_q}.
\]
Applying Young's convolution inequality in the form \eqref{eq:normalized-young} and then using \eqref{eq:normalized-counting-norm}, we obtain
\begin{align*}
n^{-1-1/r^*}\|w\|_{r^*}
&=\|\widetilde{x}*_{\mu_n}y\|_{L_{r^*}}\\
&\leq\|x\|_{L_a}\|y\|_{L_b}\\
&\leq\|x\|_{L_p}\|y\|_{L_q}\\
&=n^{-1/p-1/q}\|x\|_p\|y\|_q.
\end{align*}
Since $1+1/r^*=2-1/r$, this yields
\begin{equation}\label{eq:cyclic-convolution-subcritical-upper}
\|w\|_{r^*}
\leq n^{2-S}\|x\|_p\|y\|_q.
\end{equation}
H\"older's inequality with conjugate exponents $r^*$ and $r$,
together with \eqref{eq:cyclic-convolution-subcritical-upper},
therefore yields
\[
\begin{aligned}
|T_A(x,y,z)|
&=\left|\sum_{k\in\mathbb Z_n}w_kz_k\right|\\
&\leq\|w\|_{r^*}\|z\|_r\\
&\leq n^{2-S}\|x\|_p\|y\|_q\|z\|_r
\end{aligned}
\]
and hence
\[
\|T_A\|\leq n^{2-S}.
\]

For the reverse inequality, let
\[
x=n^{-1/p}(1,\ldots,1),\qquad
y=n^{-1/q}(1,\ldots,1),\qquad
z=n^{-1/r}(1,\ldots,1).
\]
Then $\|x\|_p=\|y\|_q=\|z\|_r=1$, with the usual interpretation when $r=\infty$, and
\[
T_A(x,y,z)
=\sum_{i,k\in\mathbb Z_n}
 n^{-1/p}n^{-1/q}n^{-1/r}
=n^{2-S}.
\]
Thus $\|T_A\|\geq n^{2-S}$. Together with the upper estimate this proves \eqref{eq:exact-cyclic-form-norm}.
\end{proof}

\begin{cor}\label{cor:new-lower-bound}
Let $1\leq p\leq q\leq2\leq r\leq\infty$. Then
\begin{equation}\label{eq:new-lower-bound}
\left\|\operatorname{id}:\ell_2^{n^3}\longrightarrow X_{p,q,r}^n\right\|
\gtrsim_{p,q,r}
n^{\gamma(p,q,r)},
\end{equation}
where
\begin{equation}\label{eq:new-lower-exponent}
\gamma(p,q,r)
:=\max\left\{
\frac12+\frac1r,
\frac1q,
\min\left\{1,\frac1p+\frac1q+\frac1r-1\right\}
\right\}.
\end{equation}
The third term in \eqref{eq:new-lower-exponent} is furnished by the cyclic convolution tensor. More precisely,
\begin{equation}\label{eq:cyclic-identity-lower}
\left\|\operatorname{id}:\ell_2^{n^3}\longrightarrow X_{p,q,r}^n\right\|
\geq
n^{\min\{1,\,1/p+1/q+1/r-1\}}.
\end{equation}
\end{cor}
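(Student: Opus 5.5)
The plan is to use duality. The dual of $X_{p,q,r}^n$ is the space of trilinear forms on $\ell_p^n\times\ell_q^n\times\ell_r^n$ with the usual norm; this is the three-fold analogue of the identification $(X\otimes_\pi Y)^*=\mathcal L(X,Y^*)$ recalled in Section~\ref{sec:background tensor products}. The pairing with a tensor is the Euclidean inner product of coefficient arrays. Hence, for every nonzero tensor $A$,
\[
\|A\|_{X_{p,q,r}^n}\ \ge\ \frac{\langle A,A\rangle}{\|T_A\|}=\frac{\|A\|_{\ell_2^{n^3}}^2}{\|T_A\|},
\qquad\text{so}\qquad
\left\|\operatorname{id}:\ell_2^{n^3}\to X_{p,q,r}^n\right\|\ge\frac{\|A\|_{X_{p,q,r}^n}}{\|A\|_{\ell_2^{n^3}}}\ge\frac{\|A\|_{\ell_2^{n^3}}}{\|T_A\|}.
\]

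We apply this with the cyclic tensor $A$ of Proposition~\ref{prop:cyclic-form-norm}. There $\|A\|_{\ell_2^{n^3}}=n$ and $\|T_A\|=n^{(2-S)_+}$, which gives the lower bound $n^{1-(2-S)_+}$. Since $1-(2-S)_+=\min\{1,S-1\}$, this is exactly \eqref{eq:cyclic-identity-lower}, with constant one.

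For the full estimate \eqref{eq:new-lower-bound} we take the maximum of three lower bounds. The third is the one just proved. The other two, $n^{1/2+1/r}$ and $n^{1/q}$, are the previously known lower bounds \eqref{eq:intro-old-lower} from \cite{GPSW2017}, and we simply invoke them. The term $n^{\min\{1/q+1/r,1\}+1/p-1}$ in \eqref{eq:intro-old-lower} does not need separate treatment: it is dominated by $n^{\min\{1,S-1\}}$, because $1/p-1\le 0$ and $\min\{1/q+1/r,1\}+1/p-1\le \min\{S-1,1/p\}\le \min\{S-1,1\}$. A maximum of lower bounds is again a lower bound, with the implicit constant depending on $p,q,r$ only through the old estimates.

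The only point needing care is the duality step. One has to check that the norm dual to the three-fold projective norm is indeed the trilinear form norm, which follows by iterating the two-fold identification or directly from the definition via elementary tensors. One also has to check that the pairing matches $T_A$ under the coordinate identification used in Proposition~\ref{prop:cyclic-form-norm}. Both are routine.
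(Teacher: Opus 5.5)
Your proposal is correct and follows the paper's proof: you test duality on the cyclic tensor $A$ itself to get $\|A\|_{\ell_2^{n^3}}/\|T_A\|=n^{\min\{1,S-1\}}$, and then take the maximum with the bounds $n^{1/2+1/r}$ and $n^{1/q}$ from Lemma~5.6 of \cite{GPSW2017}. Your version only needs the easy inequality $|\langle A,B\rangle|\le\|A\|_{X_{p,q,r}^n}\|T_B\|$, which follows directly from the definition of the projective norm, rather than the full isometric identification of the dual; your remark that the middle term of Lemma~5.6 is dominated matches Remark~\ref{rem:comparison-lemma-56}.
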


\begin{proof}
By duality, Proposition~\ref{prop:cyclic-form-norm} gives
\begin{align*}
\left\|\operatorname{id}:\ell_2^{n^3}\longrightarrow X_{p,q,r}^n\right\|
&=
\left\|\operatorname{id}:(X_{p,q,r}^n)^*\longrightarrow
\ell_2^{n^3}\right\|\\
&=
\sup_{0\neq B\in(X_{p,q,r}^n)^*}
\frac{\|B\|_{\ell_2^{n^3}}}
{\|B\|_{(X_{p,q,r}^n)^*}}.
\end{align*}
Since the dual space $(X_{p,q,r}^n)^*$ is canonically
identified with the space of bounded trilinear forms on
$\ell_p^n\times\ell_q^n\times\ell_r^n$ (see, e.g., \cite[Section~2.2]{Ryan2002} in the two-fold case), the coefficient
array $A$ represents an element of $(X_{p,q,r}^n)^*$
with norm $\|T_A\|$. Hence, evaluating the supremum
at $B=A$ yields
\begin{align*}
\left\|\operatorname{id}:\ell_2^{n^3}
\longrightarrow X_{p,q,r}^n\right\|
&\geq
\frac{\|A\|_{\ell_2^{n^3}}}{\|T_A\|}\\
&=n^{1-(2-S)_+}\\
&=n^{\min\{1,S-1\}},
\end{align*}
where $S=1/p+1/q+1/r$. Note that $S\geq1$ since
$p,q\leq2$, so the exponent is non-negative.
Combining this estimate with the first and third lower
bounds in Lemma~5.6 of \cite{GPSW2017} (which provide
bounds of $n^{1/2+1/r}$ and $n^{1/q}$, respectively)
proves \eqref{eq:new-lower-bound}.
\end{proof}

\begin{rem}\label{rem:comparison-lemma-56}
The cyclic term in \eqref{eq:new-lower-exponent} always dominates
the second term in Lemma~5.6 of \cite{GPSW2017}.
Indeed, writing $S=1/p+1/q+1/r$, if $1/q+1/r\leq1$,
then $S\leq2$ and
\[
\min\{1,S-1\}
=\frac1p+\frac1q+\frac1r-1,
\]
which coincides with the second exponent in Lemma~5.6.
If instead $1/q+1/r>1$, then
\[
\min\{1,S-1\}\geq\frac1p,
\]
whereas the second exponent in Lemma~5.6 equals $1/p$.

To determine when the full lower bound is strictly improved,
observe that for $1/q+1/r\leq1$, the old and new lower
exponents coincide. If $1/q+1/r>1$, then, since $p\leq q$,
the lower exponent from Lemma~5.6 is
\[
\max\left\{\frac12+\frac1r,\frac1p\right\},
\]
whereas \eqref{eq:new-lower-exponent} gives
\[
\max\left\{
\frac12+\frac1r,\min\{1,S-1\}
\right\}.
\]
Consequently, the new lower exponent is strictly larger
if and only if
\[
\min\{1,S-1\}>
\max\left\{\frac12+\frac1r,\frac1p\right\}.
\]
This is equivalent to
\[
p>1,\qquad r>2,\qquad
\frac1q+\frac1r>1,\qquad
\frac1p+\frac1q>\frac32.
\]
Thus our cyclic construction provides a genuine improvement
of the lower bound in Lemma~5.6 of \cite{GPSW2017}
precisely in this non-empty region.
\end{rem}

\begin{example}\label{ex:new-lower-bound}
Take
\[
p=\frac65,\qquad q=\frac54,\qquad r=3.
\]
Then
\[
S=\frac56+\frac45+\frac13=\frac{59}{30}<2,
\]
and Corollary~\ref{cor:new-lower-bound} gives
\[
\left\|\operatorname{id}:\ell_2^{n^3}\longrightarrow
\ell_{6/5}^n\otimes_\pi\ell_{5/4}^n\otimes_\pi\ell_3^n\right\|
\gtrsim n^{29/30}.
\]
By contrast, the three exponents in Lemma~5.6 of \cite{GPSW2017} are
\[
\frac12+\frac13=\frac56,
\qquad
1+\frac56-1=\frac56,
\qquad
\frac45,
\]
so the previously known lower bound has exponent $5/6$. Thus the cyclic convolution tensor improves the exponent from $5/6$ to $29/30$.
\end{example}

\begin{proposition}\label{prop:cyclic-convolution}
Let $1\leq p\leq q\leq2\leq r\leq\infty$ and assume that $\frac1p+\frac1q+\frac1r\geq2$. Then
\begin{equation}\label{eq:identity-order-n}
\left\|\operatorname{id}:\ell_2^{n^3}\longrightarrow X_{p,q,r}^n\right\|
\asymp_{p,q,r} n.
\end{equation}
In fact, the lower estimate in \eqref{eq:identity-order-n} holds with constant one.
\end{proposition}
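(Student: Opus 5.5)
The plan is to combine the lower bound from Corollary~\ref{cor:new-lower-bound} with the known upper bound \eqref{eq:intro-old-upper} from \cite[Lemma~5.5]{GPSW2017}. No new estimate is needed; we only have to check that both exponents equal $1$ under the hypothesis $S:=\frac1p+\frac1q+\frac1r\geq2$.

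\emph{Lower bound.} Since $S\geq 2$, we have $S-1\geq1$, so $\min\{1,S-1\}=1$. Estimate \eqref{eq:cyclic-identity-lower} then gives
\[
\left\|\operatorname{id}:\ell_2^{n^3}\to X_{p,q,r}^n\right\|\geq n
\]
with constant one. Concretely, this is the duality argument applied to the cyclic tensor $A$ of Proposition~\ref{prop:cyclic-form-norm}. There $\|A\|_{\ell_2^{n^3}}=n$, and since $S\ge2$ the supercritical case of Young's inequality gives $\|T_A\|=1$. This settles the claim that the lower estimate holds with constant one.

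\emph{Upper bound.} By \eqref{eq:intro-old-upper},
\[
\left\|\operatorname{id}:\ell_2^{n^3}\to X_{p,q,r}^n\right\|\lesssim_{p,q,r} n^{\min\{1/q+1/r,1\}}.
\]
It remains to check that $\frac1q+\frac1r\geq1$ under the hypotheses. From $p\le q$ we get $\frac1p\ge\frac1q$, but what we need is the opposite direction: an upper bound on $\frac1p$ so that $S\ge2$ forces $\frac1q+\frac1r\ge1$. Since $p\geq1$, we have $\frac1p\le1$, hence
\[
\frac1q+\frac1r=S-\frac1p\geq 2-1=1.
\]
Therefore $\min\{1/q+1/r,1\}=1$ and the upper bound is $\lesssim_{p,q,r} n$.

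\emph{Main obstacle.} The only point needing care is this exponent bookkeeping, and it is elementary. Combining the two bounds gives \eqref{eq:identity-order-n}.
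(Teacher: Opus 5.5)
Your proposal is correct and follows the paper's own proof essentially verbatim. The lower bound $n$ with constant one comes from duality with the cyclic tensor $A$, using $\|A\|_{\ell_2^{n^3}}=n$ and $\|T_A\|=1$ for $S\ge2$. The matching upper bound comes from \cite[Lemma~5.5]{GPSW2017} together with $\frac1q+\frac1r\ge 2-\frac1p\ge 1$.
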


\begin{proof}
If $S=1/p+1/q+1/r\geq2$, Proposition~\ref{prop:cyclic-form-norm} (more precisely, \eqref{eq:exact-cyclic-form-norm}) gives $\|T_A\|=1$, while $\|A\|_{\ell_2^{n^3}}=n$. Hence, by duality,
\[
\left\|\operatorname{id}:\ell_2^{n^3}\longrightarrow X_{p,q,r}^n\right\|
\geq n.
\]
For the reverse estimate, Lemma~5.5 in \cite{GPSW2017} gives
\begin{equation}\label{eq:known-upper-bound}
\left\|\operatorname{id}:\ell_2^{n^3}\longrightarrow X_{p,q,r}^n\right\|
\lesssim_{p,q,r}n^{\min\{1/q+1/r,1\}}.
\end{equation}
Since $S\geq2$ and $1/p\leq1$, we have
\[
\frac1q+\frac1r\geq2-\frac1p\geq1.
\]
Thus the right-hand side of \eqref{eq:known-upper-bound} is of order $n$, which proves the assertion.
\end{proof}

\begin{cor}\label{cor:cyclic-volume-ratio}
Let $1\leq p\leq q\leq2\leq r\leq\infty$ and suppose that $\frac1p+\frac1q+\frac1r\geq2$.
Then
\begin{equation}\label{eq:volume-ratio-order}
\operatorname{vr}(X_{p,q,r}^n)
\asymp_{p,q,r}n^{1/2-1/r}.
\end{equation}
\end{cor}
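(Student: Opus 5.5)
The plan is to combine Proposition~\ref{prop:cyclic-convolution} with the volume-radius asymptotics of \cite[Theorem~B]{GPSW2017}. We use the relation between volume ratio and the Euclidean embedding norm for spaces with enough symmetries. Since $X_{p,q,r}^n$ has enough symmetries, its John ellipsoid is
\[
\mathcal E_{X_{p,q,r}^n}=\bigl\|\operatorname{id}:\ell_2^{n^3}\to X_{p,q,r}^n\bigr\|^{-1}B_2^{n^3}.
\]
Writing $N=n^3$, this gives
\[
\operatorname{vr}(X_{p,q,r}^n)=\bigl\|\operatorname{id}:\ell_2^{N}\to X_{p,q,r}^n\bigr\|\cdot\left(\frac{\operatorname{vol}_N(B_{X_{p,q,r}^n})}{\operatorname{vol}_N(B_2^N)}\right)^{1/N},
\]
which is exactly the identity used in \cite[(2.3)]{GPSW2017}.

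First, I will insert $\|\operatorname{id}\|\asymp_{p,q,r} n$ from Proposition~\ref{prop:cyclic-convolution}, which applies because $S=1/p+1/q+1/r\ge 2$. Second, I will read off from \cite[Theorem~B]{GPSW2017} the volume radius of $B_{X_{p,q,r}^n}$ in the regime $1\le p\le q\le 2\le r\le\infty$. Third, I will recall that $\operatorname{vol}_N(B_2^N)^{1/N}\asymp N^{-1/2}=n^{-3/2}$. The expected outcome of these inputs is
\[
\operatorname{vol}_N(B_{X_{p,q,r}^n})^{1/N}\asymp_{p,q,r} n^{-2-1/r}
\qquad\text{(for }p,q\le 2\le r\text{)},
\]
so the ratio of volume radii is of order $n^{-1/2-1/r}$. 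Multiplying by $n$ then yields
\[
\operatorname{vr}(X_{p,q,r}^n)\asymp_{p,q,r} n^{1/2-1/r}.
\]

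The main point requiring care is pinning down the exact form of the volume-radius estimate in \cite[Theorem~B]{GPSW2017} for this range of parameters. I will cross-check it against the known sharp boundary case $r=2$. There the volume ratio must be bounded, since the space has cotype $2$ and bounded volume ratio in the spirit of Szarek and Tomczak-Jaegermann. Consistently, our formula gives $n^{1/2-1/2}=1$ at $r=2$. As a second check, I will verify consistency with the upper bound $n^{\min\{1/q-1/2,\,1/2-1/r\}}$ of Corollary~\ref{cor:intro-volume}. Since $S\ge2$ gives $1/q+1/r\ge1$, we have $1/2-1/r\le 1/q-1/2$, so the minimum is indeed $1/2-1/r$. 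With these checks in place, the remaining steps are only the algebra of exponents.
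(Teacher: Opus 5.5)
Your proposal is correct and follows the paper's proof exactly. It combines the enough-symmetries relation \cite[(2.3)]{GPSW2017}, the volume-radius estimate $\operatorname{vol}_{n^3}(B_{X_{p,q,r}^n})^{1/n^3}\asymp_{p,q,r} n^{-2-1/r}$ from \cite[Theorem~B]{GPSW2017}, and $\|\operatorname{id}\|\asymp_{p,q,r} n$ from Proposition~\ref{prop:cyclic-convolution}, which together give $n^{3/2}\cdot n^{-2-1/r}\cdot n=n^{1/2-1/r}$. The $r=2$ cotype~$2$ sanity check is not part of the argument and can be dropped; uniform cotype~$2$ of $X_{p,q,2}^n$ is neither established here nor needed.
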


\begin{proof}
Theorem~B in \cite{GPSW2017} gives, in the regime $1\leq p\leq q\leq2\leq r\leq\infty$,
\begin{equation}\label{eq:volume-radius}
\operatorname{vol}_{n^3}(B_{X_{p,q,r}^n})^{1/n^3}
\asymp_{p,q,r}n^{-2-1/r}.
\end{equation}
Moreover, $X_{p,q,r}^n$ has enough symmetries. Formula~(2.3) in \cite{GPSW2017} therefore yields
\begin{equation}\label{eq:volume-ratio-formula}
\operatorname{vr}(X_{p,q,r}^n)
\asymp
n^{3/2}
\operatorname{vol}_{n^3}(B_{X_{p,q,r}^n})^{1/n^3}
\left\|\operatorname{id}:\ell_2^{n^3}\longrightarrow X_{p,q,r}^n\right\|.
\end{equation}
Combining \eqref{eq:volume-radius}, \eqref{eq:volume-ratio-formula} and Proposition~\ref{prop:cyclic-convolution}, we obtain
\[
\operatorname{vr}(X_{p,q,r}^n)
\asymp_{p,q,r}
n^{3/2}n^{-2-1/r}n
=n^{1/2-1/r},
\]
which proves \eqref{eq:volume-ratio-order}.
\end{proof}

\begin{rem}
Corollary~\ref{cor:new-lower-bound} also quantifies the improvement outside the exact region of Proposition~\ref{prop:cyclic-convolution}; see Remark~\ref{rem:comparison-lemma-56} and Example~\ref{ex:new-lower-bound}.
\end{rem}

\begin{example}\label{ex:interior-point}
Take
\[
p=\frac{20}{19},\qquad q=\frac54,\qquad r=4.
\]
Then $1<p<q<2<r$ and
\[
\frac1p+\frac1q+\frac1r
=\frac{19}{20}+\frac45+\frac14
=2.
\]
Hence, Proposition~\ref{prop:cyclic-convolution} gives
\[
\left\|\operatorname{id}:\ell_2^{n^3}\longrightarrow
\ell_{20/19}^n\otimes_\pi\ell_{5/4}^n\otimes_\pi\ell_4^n\right\|
\asymp n.
\]
The lower bound from Lemma~5.6 in \cite{GPSW2017} has exponent
\[
\max\left\{\frac34,\frac{19}{20},\frac45\right\}
=\frac{19}{20}.
\]
Thus, the cyclic convolution tensor improves the previously known lower-bound exponent from $19/20$ to $1$ for this choice of parameters. Moreover, Corollary~\ref{cor:cyclic-volume-ratio} gives
\[ 
\operatorname{vr}\left(
\ell_{20/19}^n\otimes_\pi\ell_{5/4}^n\otimes_\pi\ell_4^n
\right)
\asymp n^{1/4}.
\]
\end{example}

\section{A Lower Bound for the Identity Operator via Randomly Signed Cyclic Tensors}\label{sec:random}

We now complement the deterministic cyclic construction by a randomized signing of the same support. The resulting estimate determines the optimal polynomial order of the norm of the identity throughout the full range $1\leq p\leq q\leq2\leq r\leq\infty$, up to a power of a logarithm. In the region covered by Proposition~\ref{prop:cyclic-convolution}, the deterministic result of the previous section is stronger and removes the logarithmic loss completely. For independent Rademacher random variables $\{\varepsilon_{ik}:i,k\in\mathbb Z_n\}$, consider the random tensor
\begin{equation}\label{eq:signed-cyclic-tensor}
A_\varepsilon
:=\sum_{i,k\in\mathbb Z_n}
\varepsilon_{ik}\,e_i\otimes e_{i+k}\otimes e_k
\end{equation}
and the associated random trilinear form
\begin{equation}\label{eq:signed-cyclic-form}
T_\varepsilon(x,y,z)
:=\sum_{i,k\in\mathbb Z_n}
\varepsilon_{ik}x_i y_{i+k}z_k.
\end{equation}
As before,
\begin{equation}\label{eq:signed-euclidean-norm}
\|A_\varepsilon\|_{\ell_2^{n^3}}=n
\end{equation}
for every realization of the signs. We first record the Hilbertian estimate that drives the argument.

\begin{lemma}\label{lem:random-hilbert-bound}
There is a constant $C\in(0,\infty)$ such that
\begin{equation}\label{eq:expected-hilbert-bound}
\mathbb E_\varepsilon
\|T_\varepsilon\|_{\ell_2^n\times\ell_2^n\times\ell_2^n}
\leq C\bigl(\log(en)\bigr)^2.
\end{equation}
Consequently, for every $n\in\mathbb N$ there exists a deterministic choice of signs
$\varepsilon=(\varepsilon_{ik})$ for which
\begin{equation}\label{eq:deterministic-hilbert-bound}
\|T_\varepsilon\|_{\ell_2^n\times\ell_2^n\times\ell_2^n}
\leq C\bigl(\log(en)\bigr)^2.
\end{equation}
\end{lemma}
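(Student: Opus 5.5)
The plan is to apply Boedihardjo's estimate \cite[Corollary~1.4]{Boedihardjo2024} directly to $A_\varepsilon$ and then pass from expectation to a realization. That corollary bounds the expected injective norm of an order-$3$ random tensor $X=\sum b_{ijk}\xi_{ijk}e_i\otimes e_j\otimes e_k$, with independent centred entries bounded by one and deterministic weights $b_{ijk}$. The bound is of the form
\[
\mathbb E\|X\|_{\mathrm{inj}}\lesssim \sigma\cdot(\text{polylog})+\sigma_*\cdot(\text{polylog}),
\]
up to its precise shape. Here $\sigma$ is the maximal $\ell_2$-norm of a one-dimensional fibre of $(b_{ijk})$, i.e.\ the maximum over the three coordinate directions of $\max_{j,k}(\sum_i b_{ijk}^2)^{1/2}$ and its analogues. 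The term $\sigma_*$ is the maximal entry $\max|b_{ijk}|$, possibly with further mixed terms controlled by these. The logarithmic factor is of order $(\log(en))^2$ in dimension $n^3$, or can be absorbed into it.

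First I write $A_\varepsilon$ in this form. The coefficients $b_{ijk}=\mathbbm 1_{\{j=i+k\}}$ are deterministic, and the $\varepsilon_{ik}$ are independent Rademacher signs attached to the support $\{(i,i+k,k)\}$. The nonzero entries are indexed bijectively by pairs $(i,k)$, so the entries are genuinely independent.

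Second, I compute the fibre norms. Fixing $(j,k)$, the fibre in the $i$-direction contains exactly the one index $i=j-k$. Fixing $(i,k)$ gives only $j=i+k$. Fixing $(i,j)$ gives only $k=j-i$. Hence every one-dimensional fibre has exactly one nonzero entry of modulus one, so all fibre $\ell_2$-norms equal $1$, and the maximal entry is also $1$. Plugging in, every variance parameter in Boedihardjo's bound is $O(1)$. This leaves only the logarithmic factor, which yields \eqref{eq:expected-hilbert-bound}. Here I note that the Hilbertian injective norm of $A_\varepsilon$ coincides with $\|T_\varepsilon\|_{\ell_2^n\times\ell_2^n\times\ell_2^n}$, the norm of the trilinear form on $\ell_2^n\times\ell_2^n\times\ell_2^n$.

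The consequence \eqref{eq:deterministic-hilbert-bound} is immediate. A random variable cannot exceed its expectation almost surely, so some realization of the $2^{n^2}$ sign patterns satisfies $\|T_\varepsilon\|\le\mathbb E\|T_\varepsilon\|\le C(\log(en))^2$.

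The main obstacle is matching the exact hypotheses and parameters of Boedihardjo's corollary, including the power of the logarithm and whether extra parameters beyond the fibre norms appear. If additional parameters arise, for instance norms of two-dimensional slices viewed as matrices, I would check them directly. Each slice, say fixing $k$, is the signed permutation matrix $(\varepsilon_{ik}\mathbbm 1_{\{j=i+k\}})_{i,j}$, which has operator norm $1$. All such quantities are therefore bounded by absolute constants, and only the dimensional logarithm survives.
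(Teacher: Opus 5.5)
Your proposal is correct and follows essentially the same route as the paper: apply Boedihardjo's Corollary~1.4 to the signed cyclic tensor, observe that every one-dimensional fibre contains exactly one entry of modulus one so that all fibre parameters equal $1$, and then pass from the expectation to a realization of the signs. The paper reads off the explicit form $12\sqrt3+C_0(\log(en))^2$ for order-three tensors directly from that corollary, which fixes the power of the logarithm that you left hedged; your extra check that the two-dimensional slices are signed permutation matrices is harmless but not needed.
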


\begin{proof}
We apply Corollary~1.4 of \cite{Boedihardjo2024}. Define the random three-tensor $X=(X_{ijk})$ by
\[
X_{ijk}:=
\begin{cases}
\varepsilon_{ik}&:\, j=i+k\pmod n,\\
0&:\,\text{otherwise}.
\end{cases}
\]
Its entries are independent, centered, and bounded in absolute value by one; deterministic zero entries cause no difficulty. Moreover, every one-dimensional coordinate fiber of the support contains exactly one nonzero entry. Indeed, fixing any two of the indices $i,j,k$, the relation $j=i+k\pmod n$ uniquely determines the remaining index. Consequently,
\[
\begin{aligned}
\max_{j,k\in\mathbb Z_n}
\left(\sum_{i\in\mathbb Z_n}\mathbb E X_{ijk}^2\right)^{1/2}
&=1,\\
\max_{i,k\in\mathbb Z_n}
\left(\sum_{j\in\mathbb Z_n}\mathbb E X_{ijk}^2\right)^{1/2}
&=1,\\
\max_{i,j\in\mathbb Z_n}
\left(\sum_{k\in\mathbb Z_n}\mathbb E X_{ijk}^2\right)^{1/2}
&=1.
\end{aligned}
\]
Corollary~1.4 of \cite{Boedihardjo2024}, specialized to tensors of order three and side length $n$, therefore yields
\[
\mathbb E_\varepsilon\|X\|_{\mathrm{inj}}
\leq 12\sqrt3+C_0\bigl(\log(en)\bigr)^2
\lesssim \bigl(\log(en)\bigr)^2.
\]
Since the injective norm of $X$ equals the trilinear form norm of $T_\varepsilon$ on $\ell_2^n\times\ell_2^n\times\ell_2^n$, this proves \eqref{eq:expected-hilbert-bound}. The deterministic assertion follows by choosing a realization whose norm does not exceed the expectation.
\end{proof}

Fix from now on a choice of signs satisfying \eqref{eq:deterministic-hilbert-bound}, and put $L_n:=C\bigl(\log(en)\bigr)^2$. The same signed tensor has two further endpoint estimates. First, the one-point-fiber property of the cyclic support gives
\begin{equation}\label{eq:11infty-endpoint}
\|T_\varepsilon\|_{\ell_1^n\times\ell_1^n\times\ell_\infty^n}=1.
\end{equation}
Indeed, for a trilinear form with coefficients $(a_{ijk})$, we have
\[
\|T\|_{\ell_1^n\times\ell_1^n\times\ell_\infty^n}
=\max_{i,j}\sum_k|a_{ijk}|,
\]
and for the tensor in \eqref{eq:signed-cyclic-tensor} the sum on the right-hand side equals one. Second, the Cauchy--Schwarz inequality in the $k$-variable and \eqref{eq:deterministic-hilbert-bound} give
\begin{equation}\label{eq:22infty-endpoint}
\|T_\varepsilon\|_{\ell_2^n\times\ell_2^n\times\ell_\infty^n}
\leq \sqrt n\,L_n.
\end{equation}
Indeed, if $\|x\|_2,\|y\|_2\leq1$, then
\begin{align*}
\sup_{\|z\|_\infty\leq1}|T_\varepsilon(x,y,z)|
&=\sum_k\left|\sum_i\varepsilon_{ik}x_i y_{i+k}\right|\\
&\leq\sqrt n
\left(\sum_k\left|\sum_i\varepsilon_{ik}x_i y_{i+k}\right|^2\right)^{1/2}\\
&\leq\sqrt n\,L_n.
\end{align*}

\begin{thm}\label{thm:universal-log-bound}
Let $1\leq p\leq q\leq2\leq r\leq\infty$. Then
\begin{equation}\label{eq:universal-log-lower}
\left\|\operatorname{id}:\ell_2^{n^3}\longrightarrow X_{p,q,r}^n\right\|
\gtrsim
\frac{n^{\min\{1/q+1/r,\,1\}}}
{\bigl(\log(en)\bigr)^{4(1-1/q)}}.
\end{equation}
Consequently, together with the upper estimate from \cite[Lemma~5.5, (5.8b)]{GPSW2017},
\begin{equation}\label{eq:universal-two-sided}
\frac{n^{\min\{1/q+1/r,\,1\}}}
{\bigl(\log(en)\bigr)^{4(1-1/q)}}
\lesssim
\left\|\operatorname{id}:\ell_2^{n^3}\longrightarrow X_{p,q,r}^n\right\|
\lesssim_{p,q,r}
n^{\min\{1/q+1/r,\,1\}}.
\end{equation}
In particular, the optimal polynomial growth exponent of the norm of the identity is determined throughout this parameter range up to logarithmic factors.
\end{thm}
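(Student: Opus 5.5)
The plan is to argue by duality exactly as in Corollary~\ref{cor:new-lower-bound}, now testing against the signed tensor $A_\varepsilon$ from \eqref{eq:signed-cyclic-tensor} with the signs fixed as in Lemma~\ref{lem:random-hilbert-bound}. By \eqref{eq:signed-euclidean-norm}, $\|A_\varepsilon\|_{\ell_2^{n^3}}=n$, so
\[
\left\|\operatorname{id}:\ell_2^{n^3}\to X^n_{p,q,r}\right\|\ge \frac{n}{\|T_\varepsilon\|_{\ell_p^n\times\ell_q^n\times\ell_r^n}} .
\]
Hence \eqref{eq:universal-log-lower} follows once we show
\begin{equation}
\|T_\varepsilon\|_{\ell_p^n\times\ell_q^n\times\ell_r^n}\lesssim n^{(1-1/q-1/r)_+}\,L_n^{2(1-1/q)},
\end{equation}
since $L_n^{2(1-1/q)}\asymp(\log(en))^{4(1-1/q)}$ and $1-(1-1/q-1/r)_+=\min\{1/q+1/r,1\}$. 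The two-sided statement \eqref{eq:universal-two-sided} then comes from adding the upper bound of \cite[Lemma~5.5]{GPSW2017}.

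\emph{Reductions.} Since $p\le q$, we have $B_{\ell_p^n}\subset B_{\ell_q^n}$, so $\|T_\varepsilon\|_{p,q,r}\le\|T_\varepsilon\|_{q,q,r}$, and we may assume $p=q$. If $1/q+1/r>1$, define $r_0$ by $1/r_0=1-1/q\in[0,1/2]$. Then $r<r_0$, so $B_{\ell_r^n}\subset B_{\ell_{r_0}^n}$ and $\|T_\varepsilon\|_{q,q,r}\le\|T_\varepsilon\|_{q,q,r_0}$. Since $r_0\ge 2$ and $1/q+1/r_0=1$, the claimed bound for $(q,q,r_0)$ is exactly $L_n^{2(1-1/q)}$, which is the claimed bound for $(q,q,r)$. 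It therefore suffices to treat the case $1/q+1/r\le1$ with $p=q$.

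\emph{Interpolation.} We use the three endpoint estimates already available: \eqref{eq:11infty-endpoint} at $(1/p,1/q,1/r)=(1,1,0)$ with bound $1$; \eqref{eq:deterministic-hilbert-bound} at $(\tfrac12,\tfrac12,\tfrac12)$ with bound $L_n$; and \eqref{eq:22infty-endpoint} at $(\tfrac12,\tfrac12,0)$ with bound $\sqrt n\,L_n$. We write
\[
\Bigl(\tfrac1q,\tfrac1q,\tfrac1r\Bigr)=\theta_0(1,1,0)+\theta_1\Bigl(\tfrac12,\tfrac12,\tfrac12\Bigr)+\theta_2\Bigl(\tfrac12,\tfrac12,0\Bigr)
\]
with $\theta_0=2/q-1$, $\theta_1=2/r$ and $\theta_2=2-2/q-2/r$. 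All three weights are nonnegative precisely because $q\le2\le r$ and $1/q+1/r\le1$, and they sum to one. Multilinear Riesz--Thorin interpolation \cite[Section~4.4]{BerghLofstrom1976}, applied twice (first on a segment, then between that segment point and the remaining vertex), gives
\[
\|T_\varepsilon\|_{q,q,r}\lesssim 1^{\theta_0}L_n^{\theta_1}(\sqrt n L_n)^{\theta_2}=n^{1-1/q-1/r}L_n^{2-2/q},
\]
which is the required bound.

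\emph{Main obstacle.} The delicate point is the interpolation step itself. Complex multilinear interpolation is naturally stated for complex scalars, so we must check that passing from real to complex trilinear forms costs only an absolute constant; a factor of at most $2^3$ suffices, via the splitting into real and imaginary parts. We must also check that iterating the interpolation over the three vertices is legitimate with the exponents varying simultaneously in all three slots. Beyond that, the remaining work is the bookkeeping of the exponents above.
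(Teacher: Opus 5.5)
Your proposal is correct and follows the paper's proof essentially step for step. You use the same signed cyclic tensor, the same three endpoints $(1,1,\infty)$, $(2,2,2)$, $(2,2,\infty)$ with weights $2/q-1$, $2/r$, $2-2/q-2/r$, and the same reductions via $B_{\ell_p^n}\subseteq B_{\ell_q^n}$ and $r_0=q^*$. The only differences are cosmetic: you do the reductions before interpolating, and you carry out the three-point interpolation as two successive two-point steps. That is a valid justification of the paper's single appeal to multilinear Riesz--Thorin, and working with complex forms throughout keeps the total constant at $8$.
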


\begin{proof}
We first consider the region
\begin{equation}\label{eq:nonsaturated-region}
\frac1q+\frac1r\leq1.
\end{equation}
Set
\[
a:=\frac2q-1,\qquad 
b:=\frac2r,\qquad
c:=2-\frac2q-\frac2r.
\]
Under \eqref{eq:nonsaturated-region}, we have $a,b,c\geq0$ and $a+b+c=1$. Moreover,
\[
\frac1q=a+\frac{b+c}{2},
\qquad
\frac1r=\frac b2.
\]
We now interpolate between the three endpoint spaces $\ell_1^n\times\ell_1^n\times\ell_\infty^n$, $\ell_2^n\times\ell_2^n\times\ell_2^n$,
 and $\ell_2^n\times\ell_2^n\times\ell_\infty^n$.
By the multilinear Riesz--Thorin interpolation inequality
for real-valued forms, obtained from the complex
multilinear interpolation theorem
(see, e.g., \cite[Section~4.4]{BerghLofstrom1976})
with an absolute multiplicative constant at most $8$,
and using the endpoint estimates
\eqref{eq:11infty-endpoint},
\eqref{eq:deterministic-hilbert-bound}, and
\eqref{eq:22infty-endpoint}, we obtain
\begin{align}
\|T_\varepsilon\|_{\ell_q^n\times\ell_q^n\times\ell_r^n}
&\leq 8\cdot 1^a L_n^b(\sqrt n\,L_n)^c\nonumber\\
&=8\,n^{1-1/q-1/r}L_n^{2-2/q}\nonumber\\
&\lesssim n^{1-1/q-1/r}
\bigl(\log(en)\bigr)^{4(1-1/q)}.
\label{eq:interpolated-nonsaturated}
\end{align}

Since $p\leq q$, we have $B_{\ell_p^n}\subseteq B_{\ell_q^n}$, and therefore the same upper estimate holds for $\|T_\varepsilon\|_{\ell_p^n\times\ell_q^n\times\ell_r^n}$. By duality and \eqref{eq:signed-euclidean-norm},
\[
\left\|\operatorname{id}:\ell_2^{n^3}\longrightarrow X_{p,q,r}^n\right\|
\geq
\frac{n}{\|T_\varepsilon\|_{\ell_p^n\times\ell_q^n\times\ell_r^n}}
\gtrsim
\frac{n^{1/q+1/r}}
{\bigl(\log(en)\bigr)^{4(1-1/q)}}.
\]
Since $1/q+1/r\leq1$, this establishes \eqref{eq:universal-log-lower} in the present case.

Suppose now that
\begin{equation}\label{eq:saturated-region}
\frac1q+\frac1r\geq1.
\end{equation}
Let $r_0:=q^*$, so that $\frac1q+\frac1{r_0}=1$. The boundary case of \eqref{eq:interpolated-nonsaturated} gives
\[
\|T_\varepsilon\|_{\ell_q^n\times\ell_q^n\times\ell_{r_0}^n}
\lesssim
\bigl(\log(en)\bigr)^{4(1-1/q)}.
\]
Now, condition \eqref{eq:saturated-region} is equivalent to $r\leq r_0$. Hence, we have the inclusion $B_{\ell_r^n}\subseteq B_{\ell_{r_0}^n}$ and consequently
\[
\|T_\varepsilon\|_{\ell_q^n\times\ell_q^n\times\ell_r^n}
\lesssim
\bigl(\log(en)\bigr)^{4(1-1/q)}.
\]
Again using $p\leq q$, duality, and \eqref{eq:signed-euclidean-norm}, we obtain
\[
\left\|\operatorname{id}:\ell_2^{n^3}\longrightarrow X_{p,q,r}^n\right\|
\gtrsim
\frac{n}{\bigl(\log(en)\bigr)^{4(1-1/q)}}.
\]
Since $1/q+1/r\geq1$, this establishes
\eqref{eq:universal-log-lower} in the remaining case. Finally, the upper estimate in \eqref{eq:universal-two-sided} follows from \cite[Lemma~5.5, (5.8b)]{GPSW2017}.
\end{proof}

\begin{rem}\label{rem:relation-two-sections}
Theorem~\ref{thm:universal-log-bound} and the deterministic results of Section~\ref{sec:cyclic} complement each other. The former determines the optimal polynomial growth exponent of the norm of the identity throughout the range $1\leq p\leq q\leq2\leq r\leq\infty$, up to logarithmic factors. In the region $\frac1p+\frac1q+\frac1r\geq2$, Proposition~\ref{prop:cyclic-convolution} yields the stronger, log-free estimate
\[
\left\|\operatorname{id}:\ell_2^{n^3}
\longrightarrow X_{p,q,r}^n\right\|
\asymp_{p,q,r} n.
\]
Thus, the deterministic cyclic construction gives the exact asymptotic order in this region, while the randomized construction determines the optimal polynomial growth exponent throughout the full parameter range.
\end{rem}

\begin{cor}\label{cor:universal-log-volume-ratio}
Let $1\leq p\leq q\leq2\leq r\leq\infty$. Then
\begin{equation}\label{eq:universal-log-vr}
\frac{n^{\min\{1/q-1/2,\,1/2-1/r\}}}
{\bigl(\log(en)\bigr)^{4(1-1/q)}}
\lesssim
\operatorname{vr}(X_{p,q,r}^n)
\lesssim_{p,q,r}
n^{\min\{1/q-1/2,\,1/2-1/r\}}.
\end{equation}
\end{cor}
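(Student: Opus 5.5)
The plan is to combine Theorem~\ref{thm:universal-log-bound} with the volume-ratio formula used in the proof of Corollary~\ref{cor:cyclic-volume-ratio}. Since $X_{p,q,r}^n$ has enough symmetries, formula~(2.3) of \cite{GPSW2017} gives
\[
\operatorname{vr}(X_{p,q,r}^n)\asymp n^{3/2}\operatorname{vol}_{n^3}(B_{X_{p,q,r}^n})^{1/n^3}\left\|\operatorname{id}:\ell_2^{n^3}\to X_{p,q,r}^n\right\|.
\]
Theorem~B of \cite{GPSW2017} gives, throughout $1\le p\le q\le 2\le r\le\infty$,
\[
\operatorname{vol}_{n^3}(B_{X_{p,q,r}^n})^{1/n^3}\asymp_{p,q,r} n^{-2-1/r}.
\]
Multiplying these, the prefactor is $n^{3/2}n^{-2-1/r}=n^{-1/2-1/r}$, up to constants depending only on $p,q,r$.

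Next, insert the two-sided bound \eqref{eq:universal-two-sided}. The upper estimate becomes
\[
\operatorname{vr}(X_{p,q,r}^n)\lesssim_{p,q,r} n^{\min\{1/q+1/r,1\}-1/2-1/r}.
\]
The lower estimate becomes the same power of $n$ divided by $(\log(en))^{4(1-1/q)}$.

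It then remains to simplify the exponent. Note that
\[
\min\{1/q+1/r,1\}-\tfrac12-\tfrac1r=\min\{1/q-1/2,\ 1/2-1/r\}.
\]
This is exactly the exponent in \eqref{eq:universal-log-vr}.

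There is no genuine obstacle; the only care needed is bookkeeping of constants. In the lower bound, the implied constant inherits dependence on $p,q,r$ through Theorem~B and through the $\asymp$ in (2.3). The displayed $\lesssim$ on the left should therefore be read as allowing such dependence, unless one checks that the volume-radius lower bound from Theorem~B holds with a universal constant.
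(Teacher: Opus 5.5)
Your proposal is correct and follows the paper's proof. Like the paper, you combine Theorem~B of \cite{GPSW2017} with formula~(2.3) to get $\operatorname{vr}(X_{p,q,r}^n)\asymp_{p,q,r} n^{-1/2-1/r}\|\operatorname{id}:\ell_2^{n^3}\to X_{p,q,r}^n\|$ and then insert Theorem~\ref{thm:universal-log-bound}, your identity $\min\{1/q+1/r,1\}-\tfrac12-\tfrac1r=\min\{1/q-\tfrac12,\tfrac12-\tfrac1r\}$ simply replacing the paper's case split; your caveat on constants is well taken, since the paper's own argument also passes through $\asymp_{p,q,r}$ here (formula~(2.3) holds with absolute constants, so any $p,q,r$-dependence in the left-hand bound enters only through Theorem~B).
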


\begin{proof}
By \eqref{eq:volume-radius} and \eqref{eq:volume-ratio-formula},
\[
\operatorname{vr}(X_{p,q,r}^n)
\asymp_{p,q,r}
 n^{-1/2-1/r}
\left\|\operatorname{id}:\ell_2^{n^3}\longrightarrow X_{p,q,r}^n\right\|.
\]
If $1/q+1/r\leq1$, then
\[
-\frac12-\frac1r+\frac1q+\frac1r
=\frac1q-\frac12,
\]
whereas if $1/q+1/r\geq1$, then
\[
-\frac12-\frac1r+1=\frac12-\frac1r.
\]
These two exponents are respectively the minimum of $1/q-1/2$ and $1/2-1/r$ in the corresponding regions. The assertion now follows from Theorem~\ref{thm:universal-log-bound}.
\end{proof}

The logarithmic loss in Theorem~\ref{thm:universal-log-bound}
arises from the estimate for the injective norm of the
randomly signed cyclic tensor. It is natural to ask whether
this loss is intrinsic to the embedding norm. We conjecture the following. 

\begin{conjecture}\label{conj:log-free}
For every $1\leq p\leq q\leq2\leq r\leq\infty$,
\[
\left\|\operatorname{id}:\ell_2^{n^3}
\longrightarrow X_{p,q,r}^n\right\|
\asymp_{p,q,r}
n^{\min\{1/q+1/r,\,1\}}.
\]
\end{conjecture}

In particular, the conjecture would imply that
\[
\operatorname{vr}(X_{p,q,r}^n)
\asymp_{p,q,r}
n^{\min\{1/q-1/2,\,1/2-1/r\}}
\]
throughout the full parameter range, without logarithmic
corrections.

\section*{Acknowledgement}
JP has been supported by the German Research Foundation under DFG project 516672205 \emph{Limit theorems for the volume of
random projections of $\ell_p$-balls}.
 
ChatGPT (GPT-6, OpenAI) was used for grammar and spelling checks and to assist in the construction of Examples~\ref{ex:new-lower-bound} and~\ref{ex:interior-point}.

\bibliographystyle{abbrv}

\bibliography{CyclicConvolution}

\end{document}